\title{Morse-type integrals on non-K\"ahler manifolds}
\author[S. Ko\l odziej]{S\l awomir Ko\l odziej}
\address{Faculty of Mathematics and Computer Science, Jagiellonian University 30-348, Krak\'ow, \L ojasiewicza 6, Poland.}
\email{Slawomir.Kolodziej@im.uj.edu.pl}
\author[V. Tosatti]{Valentino Tosatti}
\address{Department of Mathematics, Northwestern University, 2033 Sheridan Road, Evanston, IL 60208, USA.}
\email{tosatti@math.northwestern.edu}
\dedicatory{Dedicated to Professor D.H. Phong on the occasion of his 65th birthday}
\theoremstyle{plain}
\newtheorem{theorem}{Theorem}[section]
\newtheorem{proposition}[theorem]{Proposition}
\newtheorem{conjecture}[theorem]{Conjecture}
\theoremstyle{definition}
\newtheorem{remark}[theorem]{Remark}
\numberwithin{equation}{section}
\newcommand{\Vol}{\mathrm{Vol}}
\newcommand{\del}{\partial}
\newcommand{\de}{\partial}
\newcommand{\dbar}{\overline{\del}}
\newcommand{\db}{\overline{\del}}
\newcommand{\ddbar}{\sqrt{-1}\del\dbar}
\newcommand{\ve}{\varepsilon}
\newcommand{\vp}{\varphi}
\newcommand{\ti}[1]{\tilde{#1}}
\renewcommand{\leq}{\leqslant}
\renewcommand{\geq}{\geqslant}
\begin{document}
\begin{abstract} We pose a conjecture about Morse-type integrals in nef $(1,1)$ classes on compact Hermitian manifolds, and we show that it holds for semipositive classes, or when the manifold admits certain special Hermitian metrics.
\end{abstract}

\maketitle

\section{Introduction}
Let $(X^n,\omega)$ be a compact Hermitian manifold and $\alpha$ a closed real $(1,1)$ form on $X$. The cohomology class $[\alpha]$ (in Bott-Chern cohomology) consists precisely of all closed real $(1,1)$ forms which can be written in the form
$\alpha+\ddbar u,$
for some $u\in C^\infty(X,\mathbb{R})$.

A class $[\alpha]$ is called {\em nef} if it contains representatives with arbitrary small negative part, namely if for every $\ve>0$ there is $u_\ve\in C^\infty(X,\mathbb{R})$ such that $\alpha+\ddbar u_\ve\geq -\ve\omega$. When $X$ is K\"ahler, this is equivalent to the class $[\alpha]$ being a limit of K\"ahler classes.

A class $[\alpha]$ is called {\em big} if it contains a K\"ahler current $T$ in the following sense: there is $\ve>0$ and there exists a quasi-psh function $u$ on $X$ (locally the sum of psh plus smooth) such that $T:=\alpha+\ddbar u\geq \ve\omega$ holds in the weak sense of currents. In this case Demailly-P\u{a}un show that $X$ must be bimeromorphic to a compact K\"ahler manifold (i.e. $X$ is in Fujiki's class $\mathcal{C}$), and then Boucksom \cite{Bo} defines the {\em volume} of $[\alpha]$ to be
$$\Vol([\alpha])=\sup_{T}\int_XT_{ac}^n>0,$$
where the supremum is over all K\"ahler currents in the class $[\alpha]$, and $T_{ac}$ denotes the absolutely continuous part of $T$ in the Lebesgue decomposition (see \cite{Bo} for more details). On the other hand, if $[\alpha]$ is not big (on general compact complex manifolds), then one can simply define $\Vol([\alpha])=0$. Boucksom also shows that if $X$ is in class $\mathcal{C}$ and $[\alpha]$ is nef, then $$\Vol([\alpha])=\int_X\alpha^n.$$
The same formula is conjectured to hold for general compact complex manifolds, which boils down to a conjecture of Demailly-P\u{a}un \cite{DP} to the effect that a nef class $[\alpha]$ with $\int_X\alpha^n>0$ should be big.

If $X$ is K\"ahler, a different formula for the volume of $[\alpha]$ was proposed by Demailly \cite{De}, inspired by his holomorphic Morse inequalities \cite{De3}:
\begin{conjecture}\label{c}
For $(X^n,\omega)$ compact K\"ahler and $\alpha$ a closed real $(1,1)$ form we have
\begin{equation}\label{goal2}
\Vol([\alpha])=\inf_{u\in C^\infty(X,\mathbb{R})}\int_{X(\alpha+\ddbar u,0)}(\alpha+\ddbar u)^n,
\end{equation}
where $X(\alpha+\ddbar u,0)$ denotes the set of all points $x\in X$ such that $(\alpha+\ddbar u)(x)\geq 0$.
\end{conjecture}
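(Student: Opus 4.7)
The plan is to establish the two inequalities in Conjecture \ref{c} separately. The direction $\Vol([\alpha])\le \inf_u \int_{X(\alpha+\ddbar u,0)}(\alpha+\ddbar u)^n$ is a Morse-type upper bound on the volume, while the direction $\Vol([\alpha])\ge \inf_u(\cdots)$ asks for smooth representatives whose positive Monge--Amp\`ere mass approximates $\Vol([\alpha])$.

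For the upper bound I would proceed as follows. Suppose first that $X$ is projective and $[\alpha]=c_1(L)_{\mathbb R}$ is rational. Then Demailly's holomorphic Morse inequalities \cite{De3} give $\dim H^0(X,L^{\otimes k})\le (k^n/n!)\int_{X(\alpha,0)}\alpha^n+o(k^n)$, and combining with the algebraic identity $\Vol([\alpha])=\lim_k (n!/k^n)\dim H^0(X,L^{\otimes k})$ yields the bound. For general compact K\"ahler $X$, one would pick a K\"ahler current $T=\alpha+\ddbar v$ with $\int_X T_{ac}^n\ge \Vol([\alpha])-\ve$ and with analytic singularities (using Demailly's regularization), then pass to a log resolution $\pi:\ti{X}\to X$ on which $\pi^*T=\theta+[E]$, with $\theta$ smooth and strictly positive and $E$ an effective $\mathbb R$-divisor of simple normal crossing support. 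Since $\int_X T_{ac}^n=\int_{\ti{X}}\theta^n$, the task reduces to comparing the Morse integrals of $\theta$ on $\ti{X}$ with those of $\alpha+\ddbar u$ on $X$, which amounts to controlling the interaction of the current $[E]$ with $\pi^*(\alpha+\ddbar u)$.

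For the lower bound, pick K\"ahler currents $T_k=\alpha+\ddbar v_k$ with $\int_X(T_k)_{ac}^n\to \Vol([\alpha])$. Regularize each $v_k$ to $v_k^{(j)}$ with analytic singularities along a proper analytic set $Z_{k,j}$ and with $\alpha+\ddbar v_k^{(j)}\ge -\frac{1}{j}\omega$, then globally mollify away from $Z_{k,j}$ to obtain $u\in C^\infty(X,\mathbb R)$ satisfying $\alpha+\ddbar u\ge -\eta\omega$ for any prescribed $\eta>0$. Since mollification preserves the Monge--Amp\`ere measure up to small error on the smooth locus, $\int_{X(\alpha+\ddbar u,0)}(\alpha+\ddbar u)^n$ should capture $\int_X(T_k)_{ac}^n$ up to controllable loss near $Z_{k,j}$; a diagonal argument in $k,j,\eta$ would then conclude.

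The main obstacle lies in the upper bound for non-projective K\"ahler $X$: the analytic definition of $\Vol([\alpha])$ as a supremum over singular currents offers no direct counting function to feed into Morse inequalities, and a log resolution of a general K\"ahler current need not land in the projective category, so the projective case does not transfer. Even granting this, the exceptional current $[E]$ interacts poorly with the non-semipositive representative $\pi^*(\alpha+\ddbar u)$, making the Morse decompositions on $\ti{X}$ and $X$ hard to match. A plausible workaround would be a Monge--Amp\`ere flow producing smooth approximants to a minimal-singularities representative with sharp control on the negative part, in the spirit of Hermitian pluripotential theory.
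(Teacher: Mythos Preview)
The statement you are attempting is Conjecture~\ref{c}, which the paper explicitly leaves open: it is \emph{not} proved in the paper, so there is no ``paper's own proof'' to compare against. What the paper does record is the state of the art: the inequality $\Vol([\alpha])\le\inf_u\int_{X(\alpha+\ddbar u,0)}(\alpha+\ddbar u)^n$ is known in full generality (Demailly~\cite{De}, via the envelope regularity of Berman--Demailly~\cite{BD} or Berman~\cite{Be}), while the reverse inequality is known only under the orthogonality conjecture (hence on projective $X$) or for nef classes, and remains open in general---even when $\Vol([\alpha])=0$.

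Your proposal misidentifies where the difficulty lies. You label $\Vol\le\inf$ the ``main obstacle'' and propose a log-resolution reduction to the projective case; but this inequality is already a theorem on every compact K\"ahler manifold, and the proof is entirely different from what you sketch. One fixes $u$, sets $\beta=\alpha+\ddbar u$, considers the envelope $P_\beta(0)=\sup\{\vp\le 0:\vp\in PSH(X,\beta)\}$, and uses its $C^{1,1}$ regularity to show that the non-pluripolar Monge--Amp\`ere mass of $\beta+\ddbar P_\beta(0)$ equals $\int_{\{P_\beta(0)=0\}}\beta^n\le\int_{X(\beta,0)}\beta^n$; the left-hand side dominates $\Vol([\alpha])$ since $P_\beta(0)$ has minimal singularities. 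No resolution or Morse cohomology is needed.

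Conversely, the direction you treat as routine, $\Vol\ge\inf$, is the open one, and your sketch has genuine gaps. First, when $[\alpha]$ is not big there are no K\"ahler currents $T_k$ to pick, so your argument does not even begin in the case $\Vol([\alpha])=0$ which the paper singles out as unresolved. Second, even in the big case, ``globally mollify away from $Z_{k,j}$ to obtain $u\in C^\infty(X,\mathbb{R})$ with $\alpha+\ddbar u\ge -\eta\omega$'' is exactly the step that is not known: one cannot in general smooth a quasi-psh function with analytic singularities while simultaneously keeping a uniform lower bound on the Hessian and controlling the Monge--Amp\`ere mass near the singular set. The known results in this direction (via orthogonality of Zariski decomposition, \cite{BDPP,WN,De2}) proceed by an entirely different mechanism.
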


In \cite{De} Demailly shows that the inequality
$$\Vol([\alpha])\leq\inf_{u\in C^\infty(X,\mathbb{R})}\int_{X(\alpha+\ddbar u,0)}(\alpha+\ddbar u)^n,$$
holds, using the regularity results with Berman \cite{BD} (one can instead also use the more recent work of Berman \cite{Be}). In \cite{De2} it is shown that Conjecture \ref{c} holds whenever the orthogonality conjecture of Zariski decompositions of Boucksom-Demailly-P\u{a}un-Peternell \cite{BDPP} holds. In particular, Conjecture \ref{c} holds when $X$ is projective (for $[\alpha]=c_1(L)$ by \cite{BDPP} and for general $[\alpha]$ by Witt Nystr\"om \cite{WN}). It is also not hard to see that Conjecture \ref{c} holds when $[\alpha]$ is nef, see Proposition \ref{1} below.
Nevertheless, Conjecture \ref{c} remains open in full generality, even in the special case when $\Vol([\alpha])=0$.

Our main interest is in a version of Conjecture \ref{c} for nef classes on non-K\"ahler manifolds, which also encompasses a question posed by the second-named author in \cite[Remark 3.2]{To}:

\begin{conjecture}\label{con}For $(X^n,\omega)$ compact Hermitian and $\alpha$ a closed real $(1,1)$ form such that $[\alpha]$ is nef we have
\begin{equation}\label{goal}
\int_X \alpha^n=\inf_{u\in C^\infty(X,\mathbb{R})}\int_{X(\alpha+\ddbar u,0)}(\alpha+\ddbar u)^n.
\end{equation}
\end{conjecture}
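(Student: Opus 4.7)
The plan is to establish the claimed equality by proving the two opposite inequalities separately, since they call for quite different techniques.

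For the direction $\inf_u \int_{X(\alpha+\ddbar u,0)}(\alpha+\ddbar u)^n \leq \int_X \alpha^n$, the natural approach is to exploit the nef hypothesis: choose $u_\varepsilon\in C^\infty(X,\mathbb{R})$ with $\beta_\varepsilon := \alpha + \ddbar u_\varepsilon \geq -\varepsilon\omega$. On the locus $X(\beta_\varepsilon,0)$ all eigenvalues of $\beta_\varepsilon$ are nonnegative, so $\beta_\varepsilon^n \leq (\beta_\varepsilon+\varepsilon\omega)^n$ pointwise there, while globally $(\beta_\varepsilon+\varepsilon\omega)^n\geq 0$. This yields
\[
\int_{X(\beta_\varepsilon,0)}\beta_\varepsilon^n\leq \int_X(\beta_\varepsilon+\varepsilon\omega)^n=\int_X\beta_\varepsilon^n+\sum_{k=1}^n\binom{n}{k}\varepsilon^k\int_X\beta_\varepsilon^{n-k}\wedge\omega^k,
\]
and Stokes' theorem gives $\int_X\beta_\varepsilon^n=\int_X\alpha^n$ by $d$-closedness of $\alpha$. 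The heart of this direction is then to show that the error terms $\varepsilon^k\int_X\beta_\varepsilon^{n-k}\wedge\omega^k$ tend to zero as $\varepsilon\to 0$. On a K\"ahler manifold this is immediate since $d\omega=0$ makes each $\int_X\beta_\varepsilon^{n-k}\wedge\omega^k$ a cohomological constant. In the genuinely Hermitian setting I would instead impose closedness properties on various powers of $\omega$ (Gauduchon $\ddbar\omega^{n-1}=0$, astheno-K\"ahler $\ddbar\omega^{n-2}=0$, balanced $d\omega^{n-1}=0$, or combinations thereof) and integrate by parts iteratively, transferring factors of $\ddbar u_\varepsilon$ onto $\ddbar\omega^\ell$ pairings that vanish by hypothesis. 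When $[\alpha]$ admits a smooth semipositive representative $\alpha+\ddbar u_0\geq 0$, this direction is trivial: take $u=u_0$, so $X(\beta,0)=X$ and the two integrals coincide by Stokes.

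For the reverse inequality $\int_X\alpha^n\leq \int_{X(\alpha+\ddbar u,0)}(\alpha+\ddbar u)^n$ for every $u$, I would adapt Demailly's envelope approach. Given $u$, set $\beta=\alpha+\ddbar u$ and introduce the envelope $\phi=\sup\{v\in C^\infty(X):\alpha+\ddbar v\geq 0,\ v\leq u\}$. The Berman-Demailly $C^{1,1}$ regularity, in its Hermitian extension, produces $\phi\in C^{1,1}(X)$ with $\alpha+\ddbar\phi\geq 0$ on $X$ and the Monge-Amp\`ere measure $(\alpha+\ddbar\phi)^n$ concentrated on the contact set $\{\phi=u\}$. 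At any contact point $\phi-u$ attains a local maximum, so $\ddbar(\phi-u)\leq 0$, yielding $0\leq \alpha+\ddbar\phi\leq \beta$ there; in particular $\{\phi=u\}\subset X(\beta,0)$ and $(\alpha+\ddbar\phi)^n\leq \beta^n$ pointwise on the contact set. Stokes for $C^{1,1}$ potentials gives $\int_X(\alpha+\ddbar\phi)^n=\int_X\alpha^n$, and combining these one concludes $\int_X\alpha^n=\int_{\{\phi=u\}}(\alpha+\ddbar\phi)^n\leq \int_{X(\beta,0)}\beta^n$.

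The main obstacle is the first direction in the genuinely non-K\"ahler case: without sufficient closedness of the powers of $\omega$ there is no clear mechanism to control $\varepsilon^k\int_X\beta_\varepsilon^{n-k}\wedge\omega^k$ as $\varepsilon\to 0$, and this is presumably what forces the authors to restrict either to classes with a semipositive representative, or to manifolds equipped with special Hermitian metrics. A secondary delicate point is ensuring the envelope $\phi$ exists as a genuine $\alpha$-psh function (rather than being identically $-\infty$); for merely nef, non-semipositive $[\alpha]$ this may require a regularization via $\alpha+\delta\omega'$ for a judiciously chosen auxiliary metric $\omega'$, with passage to the limit $\delta\to 0$ recovering the desired inequality.
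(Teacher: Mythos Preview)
Your two-direction structure and the envelope idea for the inequality $\int_X\alpha^n\leq\int_{X(\beta,0)}\beta^n$ are the same as the paper's, and you correctly anticipate that the full conjecture is proved only under (a) or (b). However, you have the difficulty backwards, and there is a genuine gap in your treatment of the $\geq$ direction.

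Your direct envelope $\phi=\sup\{v:\alpha+\ddbar v\geq 0,\ v\leq u\}$ is problematic on two counts. First, for a merely nef class it may be identically $-\infty$, as you note. Second, even under hypothesis (a) where $\alpha+\ddbar v\geq 0$ exists, the $C^{1,1}$ regularity theorems you invoke (Berman--Demailly, Chu--Zhou, etc.) require the reference form to be a genuine Hermitian \emph{metric}, i.e. strictly positive; semipositivity is not enough. The paper therefore never works with an $\alpha$-envelope. Instead it takes the envelope $u_\ve$ with respect to $\beta+\ve\omega$ (whose class contains a Hermitian metric by nefness), obtains $C^{1,1}$ regularity, and gets $\int_X(\beta+\ve\omega+\ddbar u_\ve)^n\leq\int_{X(\beta+\ve\omega,0)}(\beta+\ve\omega)^n$. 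The crux is then showing $\limsup_{\ve\to 0}\int_X(\beta+\ve\omega+\ddbar u_\ve)^n\geq\int_X\alpha^n$, which is \emph{not} a Stokes identity because $\omega$ is not closed: this is exactly the limit you dismiss as a ``secondary delicate point,'' and it is the heart of the paper. Under (b) it follows from $\partial\bar\partial\omega^k=0$ and integration by parts; under (a) the paper shows that $\|u_\ve\|_{L^\infty}$ is uniformly bounded (using the semipositive representative as a competitor) and then runs a Chern--Levine--Nirenberg type iteration to control the mixed integrals $\int_X(\beta+\ve\omega+\ddbar u_\ve)^j\wedge\alpha^k\wedge\omega^{n-j-k}$. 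This CLN estimate is the paper's main technical contribution and is absent from your plan. Conversely, the direction $\inf\leq\int_X\alpha^n$ that you flag as the ``main obstacle'' is the one the paper calls the easy half.
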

As remarked above, in the non-K\"ahler case $\int_X \alpha^n$ is only conjectured to equal $\Vol([\alpha])$ for nef classes, which explains the relation between Conjectures \ref{c} and \ref{con}. Also, \eqref{goal} in particular implies that
$$\int_{X\backslash X(\alpha+\ddbar u,0)}(\alpha+\ddbar u)^n\leq 0,$$
for all $u\in C^\infty(X,\mathbb{R})$, which is an elementary-looking statement reminescent of Siu's ``calculus inequalities'' \cite{Si} derived from Demailly's holomorphic Morse inequalities \cite{De3}. As was observed in \cite{To}, Conjecture \ref{con} also has applications to complex Monge-Amp\`ere equations on non-K\"ahler manifolds, as we shall explain in Section \ref{appl} below.

As mentioned above, and recalled in Proposition \ref{1} below, Conjecture \ref{con} is known to hold when $X$ is K\"ahler, or more generally in class $\mathcal{C}$, and therefore it holds if $[\alpha]$ is also big.

Our main result is the following:

\begin{theorem}\label{main}
Conjecture \ref{con} holds if either
\begin{itemize}
\item[(a)] The class $[\alpha]$ is semipositive, i.e. there is $v\in C^\infty(X,\mathbb{R})$ such that $\alpha+\ddbar v\geq 0$, or
\item[(b)] The manifold $X$ admits a Hermitian metric $\omega$ with $\de\db\omega=0=\de\db(\omega^2)$.
\end{itemize}
In particular, Conjecture \ref{con} holds when $n=2$.
\end{theorem}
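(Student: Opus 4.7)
The plan is to prove both directions of $\int_X\alpha^n=\inf$ using the Hermitian Stokes identity
\[
\int_X(\alpha+\ddbar u)^n=\int_X\alpha^n,\qquad u\in C^\infty(X,\mathbb{R}),
\]
which holds for any closed real $(1,1)$ form $\alpha$: expanding the binomial, every cross-term is $\int_X\ddbar u\wedge\sigma$ with $\sigma$ a smooth closed $(n-1,n-1)$ form built from $\alpha$ and $\ddbar u$, and hence vanishes by integration by parts.

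For part (a), applying this identity with $v$ in place of $u$ shows that $\int_X\alpha^n$ is unchanged upon replacing $\alpha$ by $\alpha+\ddbar v\ge 0$; the infimum on the right of \eqref{goal} is likewise unchanged since $u\mapsto u+v$ is a bijection of $C^\infty(X,\mathbb{R})$. Thus one may assume $\alpha\ge 0$. Taking $u=0$ gives $\inf\le\int_X\alpha^n$ since $X(\alpha,0)=X$. For the reverse inequality, the integrand is nonnegative on $X(\alpha+\ddbar u,0)$, which is enough when $\int_X\alpha^n=0$; when $\int_X\alpha^n>0$, I would argue that the smooth semipositive form $\alpha$ with positive total mass produces a K\"ahler current in $[\alpha]$ via a degenerate complex Monge-Amp\`ere construction (in the spirit of Ko\l odziej and Tosatti-Weinkove). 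Then $[\alpha]$ is big, $X$ lies in Fujiki's class $\mathcal{C}$, and Proposition~\ref{1} applies.

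For part (b), the pivotal algebraic observation is that $\partial\dbar\omega=0$ and $\partial\dbar\omega^2=0$ together imply $\partial\dbar\omega^k=0$ for every $k\ge 1$. Indeed, Leibniz gives
\[
\partial\dbar\omega^k=k(k-1)\,\omega^{k-2}\wedge\partial\omega\wedge\dbar\omega+k\,\omega^{k-1}\wedge\partial\dbar\omega,
\]
so $\partial\dbar\omega^2=2\,\partial\omega\wedge\dbar\omega=0$ forces $\partial\omega\wedge\dbar\omega=0$, and all higher $\partial\dbar\omega^k$ vanish. Integration by parts now upgrades the Stokes identity to $\int_X(\alpha+\ddbar u+\epsilon\omega)^n=\int_X(\alpha+\epsilon\omega)^n$, since each cross-term involving $\ddbar u$ reduces (after moving $\ddbar u$ off by parts) to an expression proportional to $\ddbar\omega^m$ for some $m\ge 0$. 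Fixing a nef approximation $u_\epsilon$ with $\alpha+\ddbar u_\epsilon+\epsilon\omega\ge 0$, the pointwise inequality $(\alpha+\ddbar u_\epsilon)^n\le(\alpha+\ddbar u_\epsilon+\epsilon\omega)^n$ on $X(\alpha+\ddbar u_\epsilon,0)$, together with the nonnegativity of the right-hand side on all of $X$, yields
\[
\int_{X(\alpha+\ddbar u_\epsilon,0)}(\alpha+\ddbar u_\epsilon)^n\le\int_X(\alpha+\epsilon\omega)^n\longrightarrow\int_X\alpha^n
\]
as $\epsilon\to 0$. The lower bound is handled as in part (a). Finally, $n=2$ is an instance of (b), since every Hermitian manifold admits a Gauduchon metric ($\partial\dbar\omega=0$) and $\partial\dbar\omega^2=0$ is automatic on dimensional grounds.

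The main obstacle is the lower bound in the case $\int_X\alpha^n>0$, which amounts to showing $[\alpha]$ is big --- the content of the Demailly-P\u{a}un conjecture for nef classes. This must be extracted from the extra structure: semipositivity in (a), via Monge-Amp\`ere techniques; or the special Hermitian metric in (b), e.g., Chiose's criterion in dimension $n=2$.
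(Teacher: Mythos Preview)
Your treatment of the ``easy'' inequality $\inf\leq\int_X\alpha^n$ in both cases, and of the identity $\partial\dbar\omega^k=0$ under (b), matches the paper. The gap is in the ``hard'' inequality $\int_X\alpha^n\leq\int_{X(\alpha+\ddbar u,0)}(\alpha+\ddbar u)^n$ for every $u$ when $\int_X\alpha^n>0$.

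Your strategy is to prove that $[\alpha]$ is big and then invoke Proposition~\ref{1}. But this is precisely the Demailly--P\u{a}un conjecture, which is open on non-K\"ahler manifolds even when $[\alpha]$ is semipositive: the degenerate Monge--Amp\`ere construction you allude to does not currently produce a K\"ahler current on a general Hermitian manifold, because the required uniform $L^\infty$ estimates as $\varepsilon\to 0$ are not available. In case (b) you refer back to ``as in part (a)'', which inherits the same gap, and Chiose's criterion only covers $n=2$, not arbitrary $n$. So your proposal would, if it worked, prove strictly more than the theorem (namely Demailly--P\u{a}un in these cases), and that is exactly where it breaks.

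The paper avoids this entirely. For fixed $u$ it considers the $(\beta+\varepsilon\omega)$-psh envelope $u_\varepsilon$ (where $\beta=\alpha+\ddbar u$), uses its $C^{1,1}$ regularity and the balayage property $(\beta+\varepsilon\omega+\ddbar u_\varepsilon)^n=0$ on $\{u_\varepsilon<0\}$ to obtain
\[
\int_X(\beta+\varepsilon\omega+\ddbar u_\varepsilon)^n\leq\int_{X(\beta+\varepsilon\omega,0)}(\beta+\varepsilon\omega)^n,
\]
and then shows the left-hand side tends to $\int_X\alpha^n$. In case (b) this last step is an integration by parts using $\partial\dbar\omega^k=0$. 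In case (a) one cannot integrate by parts, but semipositivity gives a \emph{uniform} bound $\|u_\varepsilon\|_{L^\infty}\leq C$ (via the obvious competitor $v-u-\sup(v-u)$), and a Chern--Levine--Nirenberg type iteration on mixed integrals $\int_X\alpha_\varepsilon^j\wedge\alpha^k\wedge\omega^{n-j-k}$ then controls the difference. No bigness is ever asserted or needed.
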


To prove this, the idea is to obtain a suitable $L^\infty$ bound for $(\alpha+\ve\omega)$-psh envelope functions, by making use of our assumptions (a) or (b). Here recent regularity results for quasi-psh envelopes are used \cite{Be2,Be,BD,CZ,PS,PS2,PS3,To2}. Once this estimate is obtained, we employ a Chern-Levine-Nirenberg type argument to deduce the main result. Obtaining such a suitable $L^\infty$ bound is the main difficulty in proving Conjecture \ref{con} in general, see Remark \ref{rmk} below.\\

{\bf Acknowledgments. }It is our pleasure to dedicate this work to Professor D.H. Phong on the occasion of his 65th birthday, in honor of his influence in Mathematics. We are also grateful to the referees for useful comments. The first-named author was partially supported by NCN grant
2017/27/B/ST1/01145. The second-named author was partially supported by NSF grant DMS-1610278. This work was completed during the second-named author's visit to the Center for Mathematical Sciences and Applications at Harvard University, which he thanks for the hospitality.

\section{Proof of the main result}
To start, we show the ``easy half'' of \eqref{goal}:
\begin{proposition}\label{easy}
For $(X^n,\omega)$ compact Hermitian and $\alpha$ a closed real $(1,1)$ form such that $[\alpha]$ is nef, if either one of assumptions (a) and (b) of Theorem \ref{main} holds, then we have
\begin{equation}\label{g2}
\int_X \alpha^n\geq\inf_{u\in C^\infty(X,\mathbb{R})}\int_{X(\alpha+\ddbar u,0)}(\alpha+\ddbar u)^n.
\end{equation}
\end{proposition}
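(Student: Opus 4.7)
The strategy divides by the two hypotheses.

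For case (a), the plan is essentially immediate: I would take $u=v$, making $\alpha+\ddbar v\ge 0$ everywhere, so $X(\alpha+\ddbar v,0)=X$ and the quantity to estimate becomes $\int_X(\alpha+\ddbar v)^n$. Expanding as $\int_X\alpha^n+\sum_{k\ge 1}\binom{n}{k}\int_X\alpha^{n-k}\wedge(\ddbar v)^k$, each cross term vanishes by integration by parts: $\alpha^{n-k}\wedge(\ddbar v)^{k-1}$ is a product of smooth $d$-closed forms, hence $d$-closed, hence both $\de$- and $\dbar$-closed by bidegree, so transferring $\ddbar$ off $v$ kills the integrand. Thus $\int_X(\alpha+\ddbar v)^n=\int_X\alpha^n$ on any compact complex manifold (with $\alpha$ closed and $v$ smooth), and the inequality holds with the infimum attained by $u=v$.

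For case (b), nefness alone supplies smooth $u$ with $\alpha+\ve\omega+\ddbar u\ge 0$ for each $\ve>0$, but $\ddbar u$ may blow up as $\ve\to 0$, so I would replace $u$ by a more canonical object. Consider the $(\alpha+\ve\omega)$-psh envelope
\[\phi_\ve:=\sup\{u\le 0:\alpha+\ve\omega+\ddbar u\ge 0\},\]
and invoke the envelope regularity results cited in the introduction to conclude that $\phi_\ve$ is $C^{1,1}$ with
\[0\le\alpha+\ve\omega+\ddbar\phi_\ve\le M\omega\quad\text{a.e.,}\]
and, crucially, with $M$ \emph{independent of $\ve$}; this uniform upper bound is where hypothesis (b) enters, through the identities $\de\db\omega=0=\de\db(\omega^2)$ that enable a Chern-Levine-Nirenberg type estimate. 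A standard smoothing (Demailly regularization or local mollification via a partition of unity) would then produce $u_{\ve,\delta}\in C^\infty(X,\mathbb{R})$ with $\alpha+(\ve+\delta)\omega+\ddbar u_{\ve,\delta}\ge 0$ and $\alpha+\ddbar u_{\ve,\delta}\le(M+\delta)\omega$.

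With such $u_{\ve,\delta}$ in hand, on $S:=X\setminus X(\alpha+\ddbar u_{\ve,\delta},0)$ the eigenvalues of $\alpha+\ddbar u_{\ve,\delta}$ with respect to $\omega$ lie in $[-(\ve+\delta),M+\delta]$ and at least one is negative, so
\[\bigl|(\alpha+\ddbar u_{\ve,\delta})^n\bigr|\le(\ve+\delta)(M+\delta)^{n-1}\omega^n\quad\text{on }S.\]
Combining this with the topological identity $\int_X(\alpha+\ddbar u_{\ve,\delta})^n=\int_X\alpha^n$ from case (a),
\[\int_{X(\alpha+\ddbar u_{\ve,\delta},0)}(\alpha+\ddbar u_{\ve,\delta})^n=\int_X\alpha^n-\int_S(\alpha+\ddbar u_{\ve,\delta})^n\le\int_X\alpha^n+C(\ve+\delta),\]
and letting $\ve,\delta\to 0$ closes the proof. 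The main obstacle in this plan is clearly the uniform $L^\infty$ bound $M$; on a general Hermitian manifold one expects the bound on $\alpha+\ve\omega+\ddbar\phi_\ve$ to degenerate as $\ve\to 0$, and condition (b) is the precise geometric hypothesis that rescues the argument.
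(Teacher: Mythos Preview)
Your treatment of case (a) is correct and matches the paper exactly: take $u=v$, so $X(\alpha+\ddbar v,0)=X$, and the integration-by-parts identity $\int_X(\alpha+\ddbar v)^n=\int_X\alpha^n$ holds on any compact complex manifold since $\alpha$ is $d$-closed.

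For case (b), however, there is a genuine gap. The decisive step in your plan is the \emph{pointwise} upper bound $\alpha+\ve\omega+\ddbar\phi_\ve\le M\omega$ with $M$ independent of $\ve$, and you attribute it to a ``Chern--Levine--Nirenberg type estimate'' enabled by $\de\db\omega=0=\de\db(\omega^2)$. But CLN-type inequalities control \emph{integrals} of wedge powers, not pointwise Hessians; the $C^{1,1}$ regularity results for envelopes (Berman, Chu--Zhou, Tosatti) give a bound for each fixed $\ve$ that depends on a positive lower bound for the reference Hermitian metric $\alpha+\ve\omega+\ddbar h_\ve$, and this lower bound degenerates as $\ve\to 0$. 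There is no mechanism by which the assumption $\de\db(\omega^k)=0$ converts these integral identities into a uniform second-order estimate on $\phi_\ve$; indeed the paper itself, when proving the \emph{other} half of Conjecture~\ref{con} under (b), avoids any such claim and uses direct integration by parts instead. So your argument for (b), as written, does not close.

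The paper's proof of case (b) is in fact completely elementary and uses no envelopes or regularity theory. Nefness gives, for each $\ve>0$, a smooth $u_\ve$ with $\alpha+\ve\omega+\ddbar u_\ve\ge 0$; since $\de\db(\omega^k)=0$ for all $k$ one may integrate by parts to get
\[
\int_X(\alpha+\ve\omega+\ddbar u_\ve)^n=\int_X(\alpha+\ve\omega)^n.
\]
Then for \emph{any} smooth $u$ one has $X(\alpha+\ddbar u,0)\subset X(\alpha+\ve\omega+\ddbar u,0)$ and, on the smaller set where $\alpha+\ddbar u\ge 0$, the monotonicity $(\alpha+\ddbar u)^n\le(\alpha+\ve\omega+\ddbar u)^n$. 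Hence
\[
\inf_u\int_{X(\alpha+\ddbar u,0)}(\alpha+\ddbar u)^n\le\inf_u\int_{X(\alpha+\ve\omega+\ddbar u,0)}(\alpha+\ve\omega+\ddbar u)^n\le\int_X(\alpha+\ve\omega)^n,
\]
and letting $\ve\to 0$ finishes. The role of (b) is solely to justify the integration by parts above, not to control any envelope.
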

\begin{proof}
Assume first that (a) holds, so there is $v\in C^\infty(X,\mathbb{R})$ such that $\alpha+\ddbar v\geq 0$. Then $X(\alpha+\ddbar v,0)=X$ and so
$$\inf_{u\in C^\infty(X,\mathbb{R})}\int_{X(\alpha+\ddbar u,0)}(\alpha+\ddbar u)^n\leq\int_X(\alpha+\ddbar v)^n=\int_X\alpha^n,$$
as desired.

Next assume (b), and fix a Hermitian metric $\omega$ with $\de\db\omega=0=\de\db(\omega^2)$ (which is easily seen to imply $\de\db(\omega^k)=0$ for all $k$).
Given any $\ve>0$ there is $u_\ve\in C^\infty(X,\mathbb{R})$ such that $\alpha+\ve\omega+\ddbar u_\ve\geq 0$ and so $X(\alpha+\ve\omega+\ddbar u_\ve,0)=X$ and
\[\begin{split}\inf_{u\in C^\infty(X,\mathbb{R})}\int_{X(\alpha+\ve\omega+\ddbar u,0)}(\alpha+\ve\omega+\ddbar u)^n&\leq\int_X(\alpha+\ve\omega+\ddbar u_\ve)^n\\
&=\int_X(\alpha+\ve\omega)^n,
\end{split}\]
integrating by parts and using that $\de\db(\omega^k)=0$.
On the other hand, given any $u\in C^\infty(X,\mathbb{R})$ we clearly have
$$X(\alpha+\ddbar u,0)\subset X(\alpha+\ve\omega+\ddbar u,0),$$
and on the set $X(\alpha+\ddbar u,0)$ we have the inequality $(\alpha+\ddbar u)^n \leq (\alpha+\ve\omega+\ddbar u)^n$, and so
\[\begin{split}
\inf_{u\in C^\infty(X,\mathbb{R})}&\int_{X(\alpha+\ddbar u,0)}(\alpha+\ddbar u)^n\\
&\leq\inf_{u\in C^\infty(X,\mathbb{R})}\int_{X(\alpha+\ve\omega+\ddbar u,0)}(\alpha+\ve\omega+\ddbar u)^n\\
&\leq\int_X(\alpha+\ve\omega)^n,
\end{split}\]
and letting $\ve\to 0$ the RHS converges to $\int_X\alpha^n,$
thus proving \eqref{g2}.
\end{proof}

Before proving Theorem \ref{main}, let us recall how \eqref{goal} is proved when $X$ is in class $\mathcal{C}$ (bimeromorphic to K\"ahler), which holds for example when $[\alpha]$ is also big.

\begin{proposition}[Demailly \cite{De,De2}]\label{1}
Conjecture \ref{con} holds if $X$ is in class $\mathcal{C}$.
\end{proposition}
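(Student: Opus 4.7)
The plan is to prove the two inequalities in \eqref{goal} separately. For the direction $\int_X \alpha^n \leq \inf_{u\in C^\infty(X,\mathbb{R})} \int_{X(\alpha + \ddbar u, 0)} (\alpha + \ddbar u)^n$, I would combine Demailly's upper bound $\Vol([\alpha]) \leq \inf_u \int_{X(\alpha + \ddbar u, 0)} (\alpha + \ddbar u)^n$ (valid on any compact Hermitian manifold, as recalled in the introduction) with Boucksom's theorem that $\Vol([\alpha]) = \int_X \alpha^n$ for any nef class on a class $\mathcal{C}$ manifold.

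For the reverse direction $\inf_u \int_{X(\alpha + \ddbar u, 0)} (\alpha + \ddbar u)^n \leq \int_X \alpha^n$, I would first treat the case where $X$ itself is K\"ahler. Since $[\alpha]$ is nef and $[\omega]$ is K\"ahler, the class $[\alpha + \ve\omega]$ is K\"ahler for every $\ve > 0$, so there exists $u_\ve \in C^\infty(X,\mathbb{R})$ with $\alpha + \ve\omega + \ddbar u_\ve > 0$. On $X(\alpha + \ddbar u_\ve, 0) \subset X = X(\alpha + \ve\omega + \ddbar u_\ve, 0)$ we have $0 \leq \alpha + \ddbar u_\ve \leq \alpha + \ve\omega + \ddbar u_\ve$ pointwise, hence $(\alpha + \ddbar u_\ve)^n \leq (\alpha + \ve\omega + \ddbar u_\ve)^n$. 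Integrating over $X$ and using that $\omega$ is closed to eliminate the $\ddbar u_\ve$ contribution cohomologically,
$$\int_{X(\alpha + \ddbar u_\ve, 0)} (\alpha + \ddbar u_\ve)^n \leq \int_X (\alpha + \ve\omega + \ddbar u_\ve)^n = \int_X (\alpha + \ve\omega)^n \to \int_X \alpha^n$$
as $\ve \to 0$.

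For general $X$ in class $\mathcal{C}$, I would pass to a K\"ahler modification $\mu: \tilde X \to X$ with K\"ahler form $\tilde\omega$ on $\tilde X$, and set $\tilde\alpha = \mu^*\alpha$. Since pullback preserves nefness, $[\tilde\alpha]$ is nef on $\tilde X$, and for any smooth $u$ on $X$ the identity $\tilde\alpha + \ddbar(\mu^* u) = \mu^*(\alpha + \ddbar u)$ together with a change of variables (valid since $\mu$ is a biholomorphism off a measure-zero analytic set) gives $\int_{X(\alpha + \ddbar u, 0)} (\alpha + \ddbar u)^n = \int_{\tilde X(\tilde\alpha + \ddbar(\mu^* u), 0)} (\tilde\alpha + \ddbar(\mu^* u))^n$. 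In view of the K\"ahler case already proved on $\tilde X$, it would suffice to construct, for each $\ve > 0$, a function $u_\ve \in C^\infty(X,\mathbb{R})$ whose pullback $\mu^* u_\ve$ satisfies $\tilde\alpha + \ve\tilde\omega + \ddbar(\mu^* u_\ve) > 0$ on $\tilde X$. This is where the main obstacle lies: the nefness of $[\alpha]$ provides, for each $\ve > 0$, a smooth $u_\ve$ on $X$ with $\alpha + \ve\omega + \ddbar u_\ve \geq 0$, whose pullback satisfies $\tilde\alpha + \ve\mu^*\omega + \ddbar(\mu^* u_\ve) \geq 0$, but $\mu^*\omega$ fails to be strictly positive along the exceptional locus of $\mu$. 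I expect to overcome this by choosing $\tilde\omega$ to differ from $\mu^*\omega$ by a semipositive form supported near the exceptional locus (e.g., a Chern form of the exceptional divisor) and tracking its vanishing contribution to the integrals as $\ve \to 0$; alternatively one may work directly on $X$ with the quasi-psh envelope of $0$ for the class $[\alpha + \ve\omega]$ and invoke the regularity results cited in the introduction to produce the required smooth approximants $u_\ve$.
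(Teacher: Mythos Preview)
Your K\"ahler case is essentially correct. Your argument for $\inf_u \leq \int_X \alpha^n$ is precisely Proposition~\ref{easy} (a K\"ahler $\omega$ satisfies hypothesis (b)). For $\int_X \alpha^n \leq \inf_u$ you invoke Demailly's bound $\Vol([\alpha]) \leq \inf_u$ together with Boucksom's $\Vol([\alpha]) = \int_X \alpha^n$ for nef classes; this is legitimate on a K\"ahler manifold, though the paper instead gives a self-contained envelope argument: fix $u$, set $\beta = \alpha + \ddbar u$, take the $(\beta+\ve\omega)$-psh envelope $u_\ve$, use its $C^{1,1}$ regularity to obtain $\int_X(\beta+\ve\omega)^n \leq \int_{X(\beta+\ve\omega,0)}(\beta+\ve\omega)^n$, and let $\ve\to 0$. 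Note also that the introduction only asserts Demailly's bound for K\"ahler $X$, not for ``any compact Hermitian manifold'' as you write.

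The genuine gap is your reduction from class $\mathcal{C}$ to K\"ahler for the inequality $\inf_u \leq \int_X \alpha^n$. You correctly identify the obstacle---the potentials $\ti{u}_\ve$ on $\ti{X}$ making $\ti{\alpha} + \ve\ti{\omega} + \ddbar \ti{u}_\ve > 0$ are not pullbacks from $X$, and there is no reason a smooth $u_\ve$ on $X$ should satisfy this upstairs, since $\mu^*\omega$ degenerates along the exceptional locus---but you leave it unresolved: both proposed fixes (adjusting $\ti{\omega}$ by an exceptional Chern form, or running the envelope directly on the Hermitian $X$) are stated as expectations rather than carried out. The paper sidesteps this entirely by citing Demailly \cite{De2} for the bimeromorphic invariance of the Morse infimum,
\[
\inf_{u\in C^\infty(X,\mathbb{R})}\int_{X(\alpha+\ddbar u,0)}(\alpha+\ddbar u)^n \;=\; \inf_{\ti{u}\in C^\infty(\ti{X},\mathbb{R})}\int_{\ti{X}(\mu^*\alpha+\ddbar \ti{u},0)}(\mu^*\alpha+\ddbar \ti{u})^n,
\]
which, combined with the obvious $\int_X \alpha^n = \int_{\ti{X}}(\mu^*\alpha)^n$, reduces the full equality \eqref{goal} on $X$ to the already-established K\"ahler case on $\ti{X}$. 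The nontrivial direction of this identity (that the infimum over all smooth $\ti{u}$ on $\ti{X}$ is no smaller than the infimum over pullbacks from $X$) is exactly the missing ingredient in your proposal, and the same identity is what would be needed to extend Demailly's $\Vol\leq\inf$ from K\"ahler to class $\mathcal{C}$ in your first paragraph.
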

\begin{proof}
Assume first that $X$ is K\"ahler. Thanks to Proposition \ref{easy}, it suffices to show the inequality
\begin{equation}\label{g3}
\int_X \alpha^n\leq\inf_{u\in C^\infty(X,\mathbb{R})}\int_{X(\alpha+\ddbar u,0)}(\alpha+\ddbar u)^n.
\end{equation}
Fix any $u\in C^\infty(X,\mathbb{R})$, write $\beta=\alpha+\ddbar u$, and for $\ve>0$ let
$$u_\ve(x)=\sup\{\vp(x)\ |\ \vp\in PSH(X,\beta+\ve\omega), \vp\leq 0\}.$$
Since the class $[\beta+\ve\omega]$ is K\"ahler, Berman \cite{Be} shows that $u_\ve\in C^{1,\gamma}(X)$ for all $\gamma<1$, and \cite{CZ,To2} (building upon \cite{CTW}) in fact give $C^{1,1}(X)$. It is then easy to show (see e.g. \cite{To2}) using this that
$$\int_X(\beta+\ve\omega)^n=\int_X(\beta+\ve\omega+\ddbar u_\ve)^n=\int_{\{u_\ve=0\}}(\beta+\ve\omega)^n\leq \int_{X(\beta+\ve\omega,0)}(\beta+\ve\omega)^n,$$
where the first equality is integration by parts (using that $\omega$ is K\"ahler), the second one uses the regularity statement above (namely $u_\ve\in C^{1,1}(X)$ implies that $\nabla^2u_\ve$ vanishes $(\beta+\ve\omega+\ddbar u_\ve)^n$-a.e. on the set $\{u_\ve=0\}$, while $(\beta+\ve\omega+\ddbar u_\ve)^n=0$ on $\{u_\ve<0\}$, see e.g. \cite[(1.1)]{To2}) and the final inequality is simple (see \cite[Proposition 3.1 (iii)]{Be2}). Letting $\ve\to 0$ we get \eqref{g3}.

In the general case when $X$ is in class $\mathcal{C}$, there exists a composition of blowups $\mu:\ti{X}\to X$ such that $\ti{X}$ is K\"ahler. Then $[\mu^*\alpha]$ is nef and big, with clearly
$$\int_{X}\alpha^n=\int_{\ti{X}}\mu^*\alpha^n,$$
while Demailly \cite{De2} shows that
\[\begin{split}
\inf_{u\in C^\infty(X,\mathbb{R})}&\int_{X(\alpha+\ddbar u,0)}(\alpha+\ddbar u)^n\\
&=\inf_{\ti{u}\in C^\infty(\ti{X},\mathbb{R})}\int_{\ti{X}(\mu^*\alpha+\ddbar \ti{u},0)}(\mu^*\alpha+\ddbar \ti{u})^n,
\end{split}\]
and so we are reduced to proving Conjecture \ref{con} on $\ti{X}$, which is K\"ahler.
\end{proof}

We can now give the proof of Theorem \ref{main}
\begin{proof}[Proof of Theorem \ref{main}]
Thanks to Proposition \ref{easy} it suffices to show that \eqref{g3} holds. Fix any $u\in C^\infty(X,\mathbb{R})$, write $\beta=\alpha+\ddbar u$.
By definition for every $\ve>0$ we can find a smooth function $h_\ve$ such that $\alpha+\ve\omega+\ddbar h_\ve>0$. We consider the envelope
\[\begin{split}
u_\ve(x)&=\sup\{\vp(x)\ |\ \vp\in PSH(X,\beta+\ve\omega), \vp\leq 0\},\\
&=-u+h_\ve+\sup\{\vp(x)\ |\ \vp\in PSH(X,\alpha+\ve\omega+\ddbar h_\ve), \vp\leq u-h_\ve\}
\end{split}\]
which thanks to \cite{CZ} (see also \cite{CTW,To2}) satisfies $u_\ve\in C^{1,1}(X)$. As in Proposition \ref{1}, this implies that
\begin{equation}\label{interm}
\int_X(\beta+\ve\omega+\ddbar u_\ve)^n=\int_{\{u_\ve=0\}}(\beta+\ve\omega)^n\leq \int_{X(\beta+\ve\omega,0)}(\beta+\ve\omega)^n,
\end{equation}
where the inequality is again simple (see \cite[Proposition 3.1 (iii)]{Be2}) and
for the first equality we again have that $u_\ve\in C^{1,1}(X)$ implies that $\nabla^2u_\ve$ vanishes $(\beta+\ve\omega+\ddbar u_\ve)^n$-a.e. on $\{u_\ve=0\}$ (see e.g. \cite[(1.1)]{To2}, which does not use the K\"ahler condition), while we still have that
\begin{equation}\label{P}
(\beta+\ve\omega+\ddbar u_\ve)^n=0 \ \ \text{ on   } \{u_\ve<0\}
\end{equation}
using the balayage procedure. Indeed, consider the Monge-Amp\`ere equation with the background Hermitian metric $\beta_\ve = \alpha+\ve\omega+\ddbar h_\ve$.
We need to verify that  the function $\varphi _\ve = u_\ve +u -h_\ve$ satisfies
$$ (\beta_\ve +  \ddbar\varphi _\ve )^n =0
$$
on the open set $U = \{u_\ve<0\} .$ For this fix a coordinate ball $B\subset U$ and use  \cite[Theorem 4.2]{KN} to find a continuous function
$\psi _\ve \in PSH(B,\beta _\ve )$
solving
$$
(\beta_\ve +  \ddbar\psi _\ve )^n =0
$$
in $B,$ together with the boundary condition $\psi _\ve = \varphi _\ve$ on the boundary of $B$. By  \cite[Proposition 2.5]{KN1} we have
 $\psi _\ve \geq \varphi _\ve$ in $B$. Therefore, as in the classical Perron method,   one modifies $\varphi _\ve$ on $B$ setting it equal to  $t\psi _\ve + (1-t) \varphi _\ve$ there.
This new function belongs to the second envelope in the definition of $u_\ve$ above for sufficiently small positive $t$ and thus $\psi _\ve = \varphi _\ve$ in $B$.
Therefore \eqref{P} holds.

On the other hand, if we let $\ve\to 0$ then we easily have
\begin{equation}\label{interm2}
\lim_{\ve\to 0}\int_{X(\beta+\ve\omega,0)}(\beta+\ve\omega)^n=\int_{X(\beta,0)}\beta^n,
\end{equation}
so if we show that
\begin{equation}\label{goal3}
\limsup_{\ve\to 0}\int_X(\beta+\ve\omega+\ddbar u_\ve)^n\geq\int_X\alpha^n,
\end{equation}
then \eqref{g3} follows from \eqref{interm}, \eqref{interm2} and \eqref{goal3}.\\

{\bf Claim. }Assume that
\begin{equation}\label{todo}
\ve^{\delta} \|u_\ve\|_{L^\infty(X)}\to 0,  \  \ \ \ \textrm{for\ some\ }\delta <1/(n-2),
\end{equation}
as $\ve\to 0$. Then we have that
\begin{equation}\label{goal4}
\lim_{\ve\to 0}\int_X(\beta+\ve\omega+\ddbar u_\ve)^n=\int_X\alpha^n,
\end{equation}
and so in particular \eqref{goal3} holds.\\

Before proving this claim, let us use it to conclude the proof of Theorem \ref{main}.
First, it is clear that \eqref{goal3} holds under our assumption (b), since in this case we can integrate by parts
$$\int_X(\beta+\ve\omega+\ddbar u_\ve)^n=\int_X(\beta+\ve\omega)^n\to \int_X\beta^n=\int_X\alpha^n.$$
On the other hand under our assumption (a), there is $v\in C^\infty(X,\mathbb{R})$ such that $\alpha _v := \alpha+\ddbar v\geq 0$. Therefore
$$\vp=v-u-\sup_X(v-u),$$
is a competitor for the supremum defining $u_\ve$, and so
$$0\geq u_\ve\geq v-u-\sup_X(v-u),$$
i.e. $$\|u_\ve\|_{L^\infty(X)}\leq C,$$
for $C$ independent of $\ve,$ and so our Claim applies.

Finally, we prove our Claim. Let us introduce the following notation:
$$
\alpha _{\ve}= \beta+\ve\omega+\ddbar u_\ve = \alpha +\ve\omega+\ddbar (u_\ve +u).$$
For fixed $\ve$ write also
$$
 I(j,k) =\int _X   \alpha _{\ve}^j \wedge \alpha ^k \wedge \omega ^{n-j-k}.
$$
Choose a constant $M$ so large that
\begin{equation}
\label{e0}
\begin{aligned}
-M\omega ^3 &\leq \sqrt{-1} \de\omega \wedge\db \omega \leq M\omega ^3 \\
-M \omega ^{k+1} &\leq  \ddbar (\omega ^{k})   \leq M \omega ^{k+1} ,\ \ k=1, 2, ... , n-1,
\end{aligned}
\end{equation}
where the inequalities here mean that the difference is a  positive form.
In what follows we shall need the estimate for $\ddbar ( \alpha _{\ve}^{p}  \wedge \omega ^{q})$ .
Note that $\de \alpha _{\ve} = \ve \de \omega$ and $\db \alpha _{\ve} = \ve \db \omega$. Thus, with the convention that terms with negative exterior powers vanish,
\begin{equation}
\begin{aligned}
&\de\db  ( \alpha _{\ve}^{p}  \wedge \omega ^{q}) =  \alpha _{\ve}^{p}\wedge \de\db (\omega ^q  )  + 2pq \ve \alpha _{\ve}^{p-1} \wedge \omega ^{q-1}
\wedge\de \omega\wedge \db \omega \\
+& p\ve \alpha _{\ve}^{p-2} \wedge \omega ^{q}\wedge (\alpha _{\ve} \wedge \de\db  \omega  +(p-1) \ve  \de\omega\wedge\db\omega ).
\end{aligned}
\end{equation}
Therefore, by  \eqref{e0}
\begin{equation}
\label{e1}
\begin{aligned}
& |\de\db ( \alpha _{\ve}^{p}  \wedge \omega ^{q})| \leq M(  \alpha _{\ve}^{p}  \wedge \omega ^{q+1} +  2pq \ve \alpha _{\ve}^{p-1} \wedge \omega ^{q+2} \\
+&p\ve  \alpha _{\ve}^{p-1}  \wedge \omega ^{q+2}
+p(p-1)\ve ^2  \alpha _{\ve}^{p-2}  \wedge \omega ^{q+3}.
\end{aligned}
\end{equation}
We  estimate integrating by parts
\begin{equation}
\begin{aligned}
&I(j,k) -  I(j-1,k+1) - \ve I(j-1, k)  \\
=& \int _X \ddbar (u_\ve +u)\wedge \alpha _{\ve}^{j-1} \wedge \alpha  ^k \wedge \omega ^{n-j-k} \\
=&\int _X   (u_\ve +u) \ddbar ( \alpha _{\ve}^{j-1} \wedge \alpha  ^k \wedge \omega ^{n-j-k})  \\
=&\int _X   (u_\ve +u)   \alpha  ^k \wedge \ddbar (\alpha _{\ve}^{j-1} \wedge\omega ^{n-j-k}).
\end{aligned}
\end{equation}
For $j=1$ this gives
$$I(1,k) \leq I(0,k+1)+\ve I(0,k) +C_0 ||u_\ve ||_{L^1 (X) } ,
$$
for some uniform constant $C_0$. But $I(0,k)$ is uniformly bounded for all $k$, and since $u_\ve$ satisfies $0\leq \beta+\ve\omega+\ddbar u_\ve\leq C\omega+\ddbar u_\ve$ and $\sup_X u_\ve=0$, it is well-known that these imply a uniform bound for $||u_\ve ||_{L^1 (X) }$ (see e.g. \cite[Proposition 2.1]{DK}). We thus conclude that $I(1,k)\leq \tilde{C}_0$ for all $k$.

Next, we assume that $j>1$ and use \eqref{todo} and \eqref{e1} to obtain

$$
 I(j,k)   \leq  I(j-1,k+1) + C_0 \ve^{-\delta}[ I(j-1,k)+\ve  I(j-2,k)+\ve ^2 I(j-3,k)] .
$$
Since $I(1,k)$ are uniformly bounded one can iterate the above estimate to obtain
$$
I(j,0) \leq C_1  \ve^{-(j-1)\delta}, \ \ \ j= 1, 2, ... , n.
$$
Now, by Stokes' theorem
\begin{equation}
\begin{aligned}
&\int_X (\beta+\ve\omega+\ddbar u_\ve)^n - \int_X\alpha^n \\
 =& \int_X (\alpha+\ve\omega+\ddbar (u_\ve +u))^n - \int_X (\alpha +\ddbar (u_\ve +u) )^n \\
=& \int_X  \ve\omega \wedge \sum_{k=0}^{n-1}   \alpha _\ve ^{n-k-1} \wedge (\alpha_\ve  -\ve \omega  )^k \\
= &\int_X  \ve\omega \wedge \sum_{k=0}^{n-1}  \sum_{s=0}^{k} (-1)^s\binom{k}{s}  \ve ^s \omega ^s  \wedge \alpha _\ve  ^{n-s-1}\\
= & \sum_{k=1}^{n-1}  \sum_{s=0}^{k} (-1)^s\binom{k}{s} \ve ^{s+1}I(n-s-1, 0).
\end{aligned}
\end{equation}
By the above estimates  $I(j,0) \leq C_1  \ve^{-(j-1)\delta}$ we see that the absolute value of the RHS tends to zero as $\ve \to 0$ for $ \delta <1/(n-2)$.
\end{proof}
\begin{remark}\label{rmk}
As shown in the arguments above, to prove the ``half'' \eqref{g3} of Conjecture \ref{con} in general the problem is to show that \eqref{goal3} holds.

First, it is easy to see that \eqref{goal3} holds when $n=3$ (cf. \cite{TW2} for the same argument in a related context):
\[\begin{split}
\int_X(\beta+\ve\omega+\ddbar u_\ve)^3&=\int_X(\beta+\ddbar u_\ve)^3+3\ve\int_X(\beta+\ve\omega+\ddbar u_\ve)^2\wedge\omega\\
&-3\ve^2\int_X(\beta+\ve\omega+\ddbar u_\ve)\wedge \omega^2+\ve^3\int_X\omega^3\\
&\geq \int_X\alpha^3-3\ve^2\int_X(\beta+\ve\omega+\ddbar u_\ve)\wedge \omega^2,
\end{split}\]
and if we pick $\omega$ Gauduchon then the last term equals $-3\ve^2\int_X(\beta+\ve\omega)\wedge \omega^2$ which goes to zero as $\ve\to 0$, proving \eqref{goal3}.

Second, for general dimension $n$, we observe that to prove \eqref{goal3} it would be enough to produce smooth functions $\ti{h}_\ve$ such that
$\alpha+\ve\omega+\ddbar \ti{h}_\ve>0$, and so that
\begin{equation}\label{todo2}
\ve^\delta\|\ti{h}_\ve\|_{L^\infty(X)}\to 0,
\end{equation}
as $\ve\to 0$, for some $0<\delta<1/(n-2).$ Indeed, from the definition of the envelope $u_\ve$ we obtain
$$\ti{h}_\ve-u-\sup_X(\ti{h}_\ve-u)\leq u_\ve\leq 0,$$
hence $\|u_\ve\|_{L^\infty(X)}\leq \|\ti{h}_\ve\|_{L^\infty(X)}+C,$ and so \eqref{todo2} implies \eqref{todo}.

For example, one could try to use the solutions of
$$(\alpha+\ve\omega+\ddbar \ti{h}_\ve)^n=e^{\ti{h}_\ve}\omega^n,$$
which exist by Cherrier, \cite{Ch}, and
which in the special case when $\alpha\geq 0$ satisfy
$$n\log\ve\leq \ti{h}_\ve\leq C$$
by the maximum principle, and so satisfy \eqref{todo2}. Or one could try to use the solutions of
$$(\alpha+\ve\omega+\ddbar \ti{h}_\ve)^n=C_\ve \omega^n,$$
which are given by Tosatti-Weinkove \cite{TW}.
However, in general it remains unclear to us whether the functions $\ti{h}_\ve$ produced by either method can be proved to satisfy \eqref{todo2}.
\end{remark}

\section{An application}\label{appl}
We now recall an application of Conjecture \ref{con} to complex Monge-Amp\`ere equations on non-K\"ahler manifolds, taken from \cite{To}.

Let $(X^n,\omega)$ be a compact Hermitian manifold with a closed real $(1,1)$ form $\alpha$ with $[\alpha]$ nef and $\int_X\alpha>0$. By assumption, for every $\ve>0$ there is $h_\ve\in C^\infty(X,\mathbb{R})$ such that $\alpha+\ve\omega+\ddbar h_\ve>0$. Suppose that for all $\ve>0$ we are also given a smooth positive volume form $\Omega_\ve$ with $\int_X\Omega_\ve=1$. Thanks to \cite{TW} we can find $\vp_\ve\in C^\infty(X,\mathbb{R})$ smooth functions solving
$$\alpha+\ve\omega+\ddbar (h_\ve+\vp_\ve)>0,\quad (\alpha+\ve\omega+\ddbar (h_\ve+\vp_\ve))^n=C_\ve\Omega_\ve,$$
for some (uniquely determined) positive constants $C_\ve$. In the K\"ahler case of course we have that
$$C_\ve=\int_X(\alpha+\ve\omega)^n\geq\int_X\alpha^n.$$
In the non-K\"ahler case, it is important to find a uniform positive lower bound for $C_\ve$ (see e.g. \cite{To,TW2}). This can be achieved using Conjecture \ref{con}, as observed in \cite[Remark 3.3]{To}:
\begin{proposition}\label{bd}
If Conjecture \ref{con} holds, then we have
\begin{equation}\label{bdd}
C_\ve\geq \int_X\alpha^n.
\end{equation}
\end{proposition}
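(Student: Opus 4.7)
The plan is to use the smooth Monge-Amp\`ere solution $u_\ve := h_\ve + \vp_\ve$ itself as a test function in the variational formula provided by Conjecture \ref{con}. Since $u_\ve \in C^\infty(X,\mathbb{R})$ and $(\alpha+\ve\omega+\ddbar u_\ve)^n = C_\ve \Omega_\ve$ by construction, this is a natural competitor whose $n$-th power is already pinned down explicitly by the equation.

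First I would restrict attention to the set $X(\alpha+\ddbar u_\ve, 0)$ where, by definition, $\alpha+\ddbar u_\ve \geq 0$. On this set we have the pointwise ordering of nonnegative real $(1,1)$-forms
\[0 \leq \alpha+\ddbar u_\ve \leq \alpha+\ve\omega+\ddbar u_\ve,\]
and I would invoke the standard monotonicity $A^n \leq B^n$ for nonnegative Hermitian $(1,1)$-forms with $A\leq B$ (which follows at once from the commutative telescoping identity $B^n - A^n = (B-A) \wedge \sum_{k=0}^{n-1} A^k \wedge B^{n-1-k}$, all of whose summands are nonnegative $(n,n)$-forms). This yields
\[(\alpha+\ddbar u_\ve)^n \leq (\alpha+\ve\omega+\ddbar u_\ve)^n = C_\ve \Omega_\ve\]
pointwise on $X(\alpha+\ddbar u_\ve, 0)$. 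Integrating and using $\int_X \Omega_\ve = 1$ then gives
\[\int_{X(\alpha+\ddbar u_\ve, 0)} (\alpha+\ddbar u_\ve)^n \leq C_\ve \int_X \Omega_\ve = C_\ve.\]

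Finally I would invoke Conjecture \ref{con} for the nef class $[\alpha]$: since the infimum in \eqref{goal} is bounded above by its value at the specific admissible competitor $u_\ve$, we obtain
\[\int_X \alpha^n = \inf_{u \in C^\infty(X,\mathbb{R})} \int_{X(\alpha+\ddbar u, 0)} (\alpha+\ddbar u)^n \leq \int_{X(\alpha+\ddbar u_\ve, 0)} (\alpha+\ddbar u_\ve)^n \leq C_\ve,\]
which is exactly \eqref{bdd}. I do not anticipate any genuine obstacle: once Conjecture \ref{con} is granted, the whole argument reduces to the pointwise monotonicity of top wedge powers and to the observation that the smooth Monge-Amp\`ere solution is itself an admissible test function in the infimum on the right-hand side of \eqref{goal}.
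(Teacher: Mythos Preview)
Your proof is correct and is essentially identical to the paper's own argument: both choose the smooth Monge--Amp\`ere potential $u_\ve = h_\ve + \vp_\ve$ as the test function in \eqref{goal}, use the pointwise monotonicity $(\alpha+\ddbar u_\ve)^n \le (\alpha+\ve\omega+\ddbar u_\ve)^n = C_\ve\Omega_\ve$ on $X(\alpha+\ddbar u_\ve,0)$, and then bound $\int_{X(\alpha+\ddbar u_\ve,0)}\Omega_\ve \le \int_X\Omega_\ve = 1$. The only difference is cosmetic: the paper writes the chain of inequalities in one display, while you spell out the telescoping justification for the monotonicity of top powers.
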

\begin{proof}
Indeed, we take $\beta_\ve=\alpha+\ddbar(h_\ve+\vp_\ve)$. Thanks to Conjecture \ref{con} we have
\[\begin{split}
\int_X \alpha^n&\leq \int_{X(\beta_\ve,0)}(\alpha+\ddbar(h_\ve+\vp_\ve))^n\\
&\leq \int_{X(\beta_\ve,0)}\left(\alpha+\ve\omega+\ddbar(h_\ve+\vp_\ve)\right)^n\\
&=C_\ve\int_{X(\beta_\ve,0)}\Omega_\ve\leq C_\ve\int_X\Omega_\ve=C_\ve,
\end{split}
\]
as required.
\end{proof}
Of course, we only need the ``half'' \eqref{g3} of Conjecture \ref{con}. Thanks to Theorem \ref{main}, Proposition \ref{1} and Remark \ref{rmk} we see that \eqref{bdd} holds when $n\leq 3$, or $[\alpha]$ is semipositive, or $X$ is in class $\mathcal{C}$.

\end{document}